\date {}
\newtheorem{thm}{Theorem}
\newtheorem{lemma}[thm]{Lemma}
\newtheorem{corollary}[thm]{Corollary}
\newtheorem{remark}[thm]{Remark}
\theoremstyle{definition}
\newtheorem{question}{Question}
\newcommand{\nats}{{\mathbb N}}
\newcommand{\reals}{{\mathbb R}}
\newcommand{\U}{\mathcal{U}}
\newcommand{\card}{{\rm Card}}
\def\A{\mathbb{A}}
\newcommand{\Card}{{\rm Card}}
\newcommand{\F}{\mathscr{F}}
\begin{document}

\begin{frontmatter}

\title{Monochromatic factorisations of words and periodicity}

\author[label1]{Ca\"ius Wojcik}
\ead{caius.wojcik@gmail.com}

   \author[label1]{Luca Q. Zamboni}
  \ead{zamboni@math.univ-lyon1.fr}

\address[label1]{Universit\'e de Lyon,
Universit\'e Lyon 1, CNRS UMR 5208,
Institut Camille Jordan,
43 boulevard du 11 novembre 1918,
F69622 Villeurbanne Cedex, France}



\begin{abstract}
In 2006 T. Brown asked the following question in the spirit of Ramsey theory:  Given a non-periodic infinite word $x=x_1x_2x_3\cdots$ with values in a non-empty set $\A,$ does there exist a finite colouring $\varphi: \A^+\rightarrow C$ relative to which $x$ does not admit a $\varphi$-monochromatic factorisation, i.e., a factorisation of the form $x=u_1u_2u_3\cdots$ with $\varphi(u_i)=\varphi(u_j)$ for all $i,j\geq 1$?  This question belongs to the class of Ramsey type problems in which one asks whether some abstract form of Ramsey's theorem holds in a certain generalised setting.
Various partial results in support of an affirmative answer to this question have appeared in the literature in recent years. In particular it is known that the question admits an affirmative answer for all non-uniformly recurrent words and hence for  almost all words relative to the standard Bernoulli measure on $A^\nats.$ This question also has a positive answer for various classes of uniformly recurrent words including Sturmian words and fixed points of strongly recognizable primitive substitutions. In this paper we give a complete and optimal affirmative answer to this question by showing that if $x=x_1x_2x_3\cdots$ is an infinite non-periodic word with values in a non-empty set $\A,$ then there exists a $2$-colouring $\varphi: \A^+\rightarrow \{0,1\}$ 
such that for any factorisation $x=u_1u_2u_3\cdots$ we have $\varphi(u_i)\neq \varphi(u_j)$ for some $i\neq j.$ In fact this condition gives a characterization of non-periodic words.  It may be reformulated in the language of ultrafilters as follows:  Let $\beta \A^+$ denote the Stone-\v Cech compactification of the discrete semigroup $\A^+$ which we regard as  the set of all ultrafilters on $\A^+.$ Then an infinite word $x=x_1x_2x_3\cdots$ with values in  $\A$ is periodic if and only if there exists $p\in \beta \A^+$ such that for each $A\in p$ there exists a factorisation $x=u_1u_2u_3\cdots $ with each $u_i \in A.$ Moreover $p$ may be taken to be an idempotent element of $\beta \A^+.$

 \end{abstract}

\begin{keyword}Combinatorics on words,  Ramsey theory.
\MSC[2010] 68R15, 05C55, 05D10.
\end{keyword}
\journal{}

\end{frontmatter}

\section{Introduction}

Give a  non-empty (not necessarily finite) set $\A,$  let $\A^+$ denote the free semigroup generated by $\A$ consisting of all finite words $u_1u_2\cdots u_n$ with $u_i \in \A,$ and let $\A^\nats$ denote the set of all infinite words $x=x_1x_2x_3\cdots$ with $x_i\in \A.$ 
We say $x\in \A^\nats$ is {\it periodic} if $x=u^\omega=uuu\cdots $ for some $u\in \A^+.$ 
The following question was independently posed by T. Brown in \cite{BTC} and by the second author in \cite{LQZ}\footnote{The original formulation of the question was stated in terms of finite colourings of the set of all factors of $x.$}:

\begin{question}\label{conj} Let $x\in \A^\nats$ be non-periodic. Does there exist a finite colouring $\varphi: \A^+\rightarrow C$ relative to which $x$ does not admit a $\varphi$-monochromatic factorisation, i.e., a factorisation of the form $x=u_1u_2u_3\cdots$ with $\varphi(u_i)=\varphi(u_j)$ for all $i,j\geq 1$?
\end{question}

A finite colouring $\varphi: \A^+\rightarrow C$ is called a {\it separating colouring} for $x$ (or a separating $|C|$-colouring for $x)$ if for all factorisations $x=u_1u_2u_3\cdots$ there exist $i,j\geq 1$ such that  $\varphi(u_i)\neq \varphi(u_j).$  While T. Brown originally stated it as a question, Question~\ref{conj} has evolved into a conjecture which states that every non-periodic word admits a separating colouring. 
We begin by illustrating Question~\ref{conj} with an example: Consider the {\it Thue-Morse} infinite word \[x=011010011001011010010\cdots\] where the $n$th term of $x$ (starting from $n=0)$  is defined as the sum modulo $2$ of the digits  in the binary expansion of
$n.$  The origins of this word go back to the beginning of the last century with the works of  A. Thue \cite{Th1, Th2} in which he proves amongst other things that $x$ is {\it overlap-free} i.e., $x$ contains no factor of the form $uuu'$ where $u'$ is a non-empty prefix of $u.$ 

Consider the $3$ colouring  $\varphi : \{0,1\}^+\rightarrow \{0,1,2\}$ defined by 
$$ \varphi(u) = \begin{cases}
0 & \text{if $u$ is a prefix of $x$ ending with $0;$} \\ 1 &  \text{if $u$ is a prefix of $x$ ending with $1;$}\\
2& \text{if $u$ is not a prefix of $x.$}
\end{cases} $$
We claim that no factorisation of $x$ is $\varphi $-monochromatic. In fact, suppose to the contrary that $x=u_1u_2u_3\cdots$ is a  $\varphi $-monochromatic factorisation of $x.$ Since $u_1$ is a prefix of $x$, it follows that $\varphi(u_1)\in \{0,1\},$ i.e., there exists $a\in \{0,1\}$ such that   each $u_i$ is a  prefix of $x$ terminating with $a.$ Pick $i\geq 2$ such that $|u_i|\leq |u_{i+1}|.$ Then  as each $u_i$ is a prefix of $x,$ it follows that $u_i$ is a prefix of $u_{i+1}$and hence $au_iu_i$ is a factor of $x.$  Writing
$u_i=va,$ (with $v$ empty or in $\{0,1\}^+),$ we have $au_iu_i=avava $ which is an overlap, contradicting that $x$ is overlap-free.  
This proves that there exists a separating $3$-colouring for the Thue-Morse word. Recently S. Avgustinovich and O. Parshina~\cite{AP} proved that it is possible to colour $\{0,1\}^+$ using only $2$ colours in such a way that no factorisation of the Thue-Morse word is monochromatic. 

Let us remark in the example above that since the Thue-Morse word $x$ is not periodic, it follows that each proper suffix $x'$ of $x$ begins in some factor $u$ which is not a prefix of $x.$ This means that  $x'$  may be written as an infinite concatenation $x'=u_1u_2u_3\cdots$ where  $\varphi(u_i)=2$ for all $i\geq 1.$ So while $x$ itself does not admit a $\varphi$-monochromatic factorisation, where $\varphi$ is the $3$-colouring of $\{0,1\}^+$ defined above, it turns out that every proper suffix of $x$ does admit a $\varphi$-monochromatic factorisation.  Moreover, this monochromatic factorisation of $x'$ has an even stronger monochromatic property: The set $\{u_n: n\geq 1\}^+$ is $\varphi$-monochromatic  (each element has $\varphi$ colour equal to $2).$ This is because each element of $\{u_n: n\geq 1\}$ is a non-prefix of $x$ and hence the same is true of any concatenation formed by elements from this set. It turns out that a weaker version of  this phenomenon is true in greater generality:  Given any $x\in A^\nats$ and any finite colouring  $ \varphi:  \A ^+   \rightarrow C ,$ one can always find a suffix $x'$ of $x$ which admits a factorisation $x'=u_1u_2u_3\cdots$ where $\varphi(u_i\cdots u_j)=\varphi(u_1)$ for all $1\leq i\leq j.$  This fact may be obtained via a straightforward application of the infinite Ramsey theorem \cite{Ram}  (see \cite{BTC, DZ1} or  see \cite{schutz} for a proof by M. P. Sch\"utzenberger which does not use Ramsey's theorem). 

Thus Question~\ref{conj} belongs to the class of Ramsey type problems in which one tries to show that some abstract form of Ramsey's theorem does not hold in certain settings.  For instance, the infinite version of Ramsey's theorem \cite{Ram} (for colouring of pairs) states that whenever the set $\Sigma_2(\nats)$ of all $2$-element subsets of $\nats$ is finitely coloured, there exists an infinite set $X\subseteq \nats$ with $\Sigma_2(X)$ monochromatic. Hence the same applies when $\nats$ is replaced by $\reals.$  On the other hand, W. Sierpi\'nski  \cite{Serp} showed that there exists a finite colouring of  $\Sigma_2(\reals)$  such that there does not exist an uncountable set $X$ with   $\Sigma_2(X)$  monochromatic. In other words, Ramsey's theorem does not extend to the uncountable setting in $\reals.$ Similarly, by a straightforward application of Ramsey's theorem, one deduces that given any finite colouring of $\nats,$ there exists an infinite $X\subseteq \nats$ all of whose pairwise sums $\{n+m: n,m\in X, n\neq m\}$ is monochromatic. Again it follows the same is true with $\nats$ replaced by $\reals.$ On the other hand, N. Hindman, I. Leader and D. Strauss \cite{HLS} recently exhibited (using the Continuum Hypothesis CH) the existence of a finite colouring of $\reals$ such that there does not exist an uncountable set with all its pairwise sums monochromatic.  In other words, this additive formulation of Ramsey's Theorem also fails in the  uncountable setting in $\reals.$ A related question in these sorts of problems concerns the least number of colours necessary to avoid the presence of monochromatic subsets of a certain kind. For instance N. Hindman \cite{H} showed that there exists a $3$-colouring of $\nats$ such that there does not exist an infinite subset $X$ with $X+X$ monochromatic, and it is an open question of Owings \cite{Ow} whether the same result may be obtained with only $2$ colours. Again, using CH and a few more colours ($288$ to be precise), it is possible to extend Hindman's result to the reals (see Theorem 2.8 in \cite{HLS}). But again it is not known whether  the same result can be obtained with only $2$ colours~\cite{Leader}.
Thus a stronger version of Question~\ref{conj} would read:  Does every non-periodic word admit a separating $2$-colouring?

Various partial results in support of an affirmative answer to Question~\ref{conj} were obtained in \cite{BPS, DPZ, DZ1, DZ2, ST}. For instance, in \cite{DPZ}, it is shown that Question~\ref{conj} admits an affirmative answer for all non-uniformly recurrent words and various classes of uniformly recurrent words including Sturmian words. In \cite{ST},  V. Salo and I. T\"orm\"a  prove that for every aperiodic linearly recurrent word $x\in \A^\nats$ there exists a finite colouring of $\A^+$ relative to which $x$ does not admit a monochromatic factorisation into factors of increasing lengths. And recently A. Bernardino, R. Pacheco and M. Silva \cite{BPS} prove  that Question~\ref{conj}  admits an affirmative answer for all fixed points of primitive strongly recognizable substitutions. In addition to the fact that these partial  results concern only restricted classes of non-periodic words (e.g., Sturmian words or  certain fixed points of primitive substitutions),   in most cases the number of colours required to colour $\A^+$ in order to avoid a monochromatic factorisation of $x$ is found to be quite high. For instance in \cite{ST}, the authors prove that if  $x\in \A^\nats$ is an aperiodic linearly recurrent word, then there exists a constant $K\geq 2$ and a  colouring $\varphi: \A^+\rightarrow C$  with  $\Card(C)=2+\sum_{i=0}^{K^5-1}2K^i(K+1)^{2i},$  such that no factorisation of $x=u_1u_2u_3\cdots ,$ verifying the additional constraint that  $|u_i|\leq |u_{i+1}|$ for each $i\geq 1,$ is $\varphi$-monochromatic. 
The constant $K$ above is chosen such that for every factor $u$ of $x,$ every first return $w$ to $u$ satisfies $|w|\leq K|u|$ (see for instance \cite{DHS}). A similar large bound depending on the recognizability index of a substitution is obtained in \cite{BPS} in the context of fixed points of strongly recognizable substitutions. In contrast, it is shown in \cite{DPZ} that every Sturmian word admits a separating $3$-colouring. 

In this paper we give a complete and optimal affirmative answer to Question~\ref{conj} by showing that for every non-periodic word $x=x_1x_2x_3\cdots \in \A^\nats,$ there exists a $2$-colouring $\varphi: \A^+\rightarrow \{0,1\}$ relative to which no factorisation of $x$ is $\varphi$-monochromatic. Moreover, this is a  characterization of periodicity of infinite words:

\begin{thm}\label{M} Let $x=x_1x_2x_3\cdots \in \A^\nats$ be an infinite word. Then $x$ is periodic if and only if for every $2$-colouring
$\varphi: \A^+\rightarrow \{0,1\}$ there exists a $\varphi$-monochromatic factorisation of $x,$ i.e., a factorisation $x=u_1u_2u_3\cdots$ such that $\varphi(u_i)=\varphi(u_j)$ for all $i,j\geq 1.$ 
\end{thm}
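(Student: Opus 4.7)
My plan addresses the two implications of the theorem separately. The easy direction is handled by pigeonhole: assume $x = u^\omega$ is periodic and let $\varphi:\A^+\to\{0,1\}$ be any $2$-colouring. Some colour $c\in\{0,1\}$ is attained by $\varphi(u^k)$ for infinitely many $k\geq 1$; enumerating these exponents as $k_1<k_2<\cdots$ one gets the $\varphi$-monochromatic factorisation $x = u^{k_1}\cdot u^{k_2}\cdot u^{k_3}\cdots$, since the infinite concatenation telescopes to $u^\omega = x$ and each piece $u^{k_i}$ has colour $c$.

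The substance is the hard direction: given non-periodic $x$, exhibit a separating $2$-colouring. I would search for a colouring of the form $\varphi(u)=0$ iff $u \in A_0$, where $A_0 \subseteq \Pref(x)$ is a set of prefixes of $x$. Since the first piece $u_1$ of any factorisation is always a prefix, this setup forces any monochromatic colour-$0$ factorisation to be an ``all-prefix factorisation'' using only prefixes in $A_0$, while any monochromatic colour-$1$ factorisation must begin with a prefix in $\Pref(x) \setminus A_0$ and use no piece in $A_0$ thereafter. The task is to choose $A_0$ ruling out both.

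The naive choice $A_0 = \Pref(x) \setminus \{\text{empty word}\}$ trivially kills colour-$1$ factorisations, but can leave colour-$0$ factorisations alive; for the Fibonacci word the substitutive identity $x = \sigma(x_1)\sigma(x_2)\cdots$ with $\sigma(0)=01,\sigma(1)=0$ supplies an all-prefix factorisation. I would therefore refine to $A_0 = \{x_1\cdots x_n : n \in S\}$ for a set $S \subseteq \nats$ built inductively from the following data. Non-periodicity of $x$ is equivalent to the statement that for every $k \geq 1$ the shifted word $x_{k+1}x_{k+2}\cdots$ differs from $x$, with a finite agreement length $M(k) = \max\{n\geq 0 : x_i = x_{i+k}\text{ for all }1 \leq i \leq n\}$. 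At each step of the recursion I would use the finite values $M(k)$ to include or exclude the next prefix in $A_0$ so as to invalidate one more would-be monochromatic factorisation (of either colour), exploiting the fact that an all-$A_0$ or all-$A_1$ factorisation of $x$ imposes strong compatibility conditions on successive $M$-values.

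The main obstacle I expect is the uniformly recurrent aperiodic case (Sturmian, primitive-substitutive, etc.), not fully covered by the prior partial results of \cite{DPZ, ST, BPS}. There the returns to every factor are bounded, so factorisations are plentiful and cannot be excluded by simple counting; the non-periodicity of $x$ must be used quantitatively through the $M(k)$, presumably combined with a Ramsey- or Hindman-type argument on infinite sequences of prefixes, to certify that the constructed $A_0$ simultaneously blocks every potential colour-$0$ and every potential colour-$1$ factorisation.
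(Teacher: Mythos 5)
Your forward direction is fine (indeed it is more complicated than necessary: if $x=u^\omega$, the constant factorisation $x=u\cdot u\cdot u\cdots$ is already $\varphi$-monochromatic for every $\varphi$, no pigeonhole needed). The converse, which is the whole content of the theorem, is only a plan: the inductive choice of the set $S$ via the agreement lengths $M(k)$ is never actually specified, and you yourself defer the decisive case (uniformly recurrent aperiodic words) to an unspecified ``Ramsey- or Hindman-type argument''. So as written there is no proof of the hard direction.

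More seriously, the family of colourings you restrict the search to --- $\varphi^{-1}(0)=A_0\subseteq \Pref(x)$, so that \emph{every} non-prefix of $x$ receives colour $1$ --- provably cannot contain a separating colouring for words such as the Fibonacci word, so no choice of $S$ can succeed. Indeed, if $x$ is aperiodic then every proper suffix $y$ of $x$ differs from $x$, hence begins with the non-prefix factor $(x\wedge y)a$, where $a$ is the letter at which $y$ first disagrees with $x$; iterating this on the successive tails shows that every proper suffix of $x$ factors entirely into non-prefixes of $x$ (this is exactly the observation the paper makes for the Thue--Morse word in its introduction). Consequently, if some nonempty prefix $p$ of $x$ had colour $1$, then writing $x=p\,v_1v_2\cdots$ with all $v_i$ non-prefixes would give a colour-$1$ monochromatic factorisation; so to block colour $1$ you are forced to put \emph{all} nonempty prefixes into $A_0$. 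But then any aperiodic word admitting a prefixal factorisation --- e.g.\ the Fibonacci word, where $x=\sigma(x_1)\sigma(x_2)\cdots$ and each piece is $0$ or $01$, both prefixes --- has a colour-$0$ monochromatic factorisation. Thus precisely in the hard cases you flag, your scheme is structurally dead. The paper avoids this trap by colouring non-prefixes nontrivially via a lexicographic order: $u$ gets colour $0$ or $1$ according to whether $u\prec P_x(|u|)$ or $P_x(|u|)\prec u$, and a prefix $u$ according to whether $x\prec y$ or $y\prec x$ for the tail $y$ with $x=uy$ (well defined by aperiodicity). A monochromatic colour-$0$ factorisation is then excluded by a potential-function argument: setting $S_k=|u_1|+\cdots+|u_k|+|x\wedge(u_{k+1}u_{k+2}\cdots)|$, one shows $S_{k+1}\le S_k$ for all $k$, contradicting $S_k\ge k$; the colour-$1$ case follows by reversing the order on $\A$. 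Some idea of this kind --- splitting the non-prefixes between the two colours in a way that interacts with the prefix comparison --- is what your approach is missing.
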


Theorem~\ref{M} has several nice immediate consequences: For instance, fix a symbol $a\in \A$ and suppose an infinite word $x\in \A^\nats$ admits a  factorisation $x=u_1u_2u_3\cdots$ where each $u_i$ is a prefix of $x$ rich in the symbol $a\in \A,$ meaning
that for each $i\geq 1$ and for each factor $v$ of $x$ with $|v|=|u_i|$ we have $f_a(u_i)\geq f_a(v)$ where $f_a(u_i)$ denotes the frequency of $a$ in $u_i.$ Then $\{n\in \nats: x_n=a\}$ is a finite union of arithmetic progressions. As another application of Theorem~\ref{M} we show that if  $u_1,u_2,u_3,\ldots, u_{2k+1}\in A^+$ (with $k$ a positive integer), then any  $x\in \A^\nats$ belonging  to  $\{u_1,u_2\}^\nats \cap \{u_2,u_3\}^\nats\cap\cdots \cap \{u_{2k},u_{2k+1}\}^\nats\cap \{u_{2k+1},u_1\}^\nats$ is periodic.

Theorem~\ref{M} may be reformulated in the language of ultrafilters as follows: Let $\beta \A^+$ denote the Stone-\v Cech compactification of the discrete semigroup $\A^+$ which we regard as  the set of all ultrafilters on $\A^+$, identifying the points of
$\A^+$ with the principal ultrafilters. As is well known, the operation of concatenation on $\A^+$ extends uniquely to $\beta \A^+$ making $\beta \A^+$ a compact right topological semigroup with $\A^+$ contained in its topological center (see for instance \cite{HS}). In particular by the Ellis-Numakura lemma, $\beta\A^+$ contains an idempotent element  i.e., an element $p$ verifying $p\cdot p=p.$ We show that an infinite word $x=x_1x_2x_3\cdots \in \A^\nats$ is periodic if and only if there exists  $p\in \beta \A^+$ such that for each $A\in p$ there exists a factorisation $x=u_1u_2u_3\cdots $ with each $u_i \in A.$ Moreover $p$ can be taken to be an idempotent element of $\beta \A^+.$

\section{Proof of Theorem~\ref{M} \& Applications}
Before embarking on the proof of Theorem~\ref{M}, we introduce some notation which will be relevant in what follows. 
Let $\A$ be a non-empty set (the {\it alphabet}). We do not assume that the cardinality of $\A$ is finite. Let $\A^*$  denote the set of all finite  words $u=u_1u_2\cdots u_{n}$ with
$u_i\in \A$.   We call $n$ the {\it length} of $u$ and denote it $|u|$. The empty word is denoted $\varepsilon$ and by convention
$|\varepsilon|=0$. We put $\A^+=\A^*\setminus \{\varepsilon\}$. For $u\in \A^+$ and $a\in \A,$ we let $|u|_a$ denote the number of occurrences of $a$ in $u.$ 
Let $\A^\nats$ denote the set of all right sided infinite words $x=x_1x_2x_3\cdots$ with values in  $\A.$ 
More generally, for $x\in \A^\nats$ and $A\subseteq \A^+,$ we write $x\in A^\nats$ if $x=u_1u_2u_3\cdots $ with $u_i\in A$ for all $i\geq 1,$ that is in case $x$ factors over the set $A.$  We say $x$ is {\it periodic} if $x\in \{u\}^\nats$ for some $u\in \A^+.$  

\begin{proof}[Proof of Theorem~\ref{M}]  First assume $x=x_0x_1x_2\cdots \in \A^\nats$ is periodic, i.e., $x\in \{u\}^\nats$ for some $u\in \A^+.$ Then the factorisation $x=u_1u_2u_3\cdots$ with each $u_i=u$ is $\varphi$-monochromatic for any choice of $\varphi: \A^+\rightarrow \{0,1\}.$   Next assume $x$ is not periodic and we will define a $2$-colouring $\varphi: \A^+\rightarrow \{0,1\}$ with the property that no factorisation of $x$ is $\varphi$-monochromatic.  Pick any total order on the set $\A$ and let $\prec$ denote the induced lexicographic order on $\A^+$ and $\A^\nats.$ For $u,v\in \A^+$ with $|u|=|v|,$ we write  $u\preccurlyeq v$ if either $u\prec v$ or $u=v.$  For each $n\geq 1,$ let $P_x(n)$ denote the prefix of $x$ of length $n,$ and for each  $y \in \A^\nats,$ let $x\wedge y$ denote the longest common prefix of $x$ and $y.$

\noindent For  $u \in A^+, $ set 

\begin{equation}\label{E} \varphi(u) = \begin{cases}
0 & \text{if $u \prec P_x(|u|)$ or if $u=P_x(|u|)$ and $x\prec y$ where $x=uy;$} \\ 1 &  \text{if $P_x(|u|) \prec u$ or if $u=P_x(|u|)$ and $y\prec x$ where $x=uy.$}
\end{cases}\end{equation} 

\noindent As $x$ is not periodic, for every proper suffix $y$ of $x$ we have either $x\prec y$ or $y\prec x,$ whence $\varphi: \A^+\rightarrow \{0,1\}$ is well defined. 

We claim that no factorisation of $x$ is $\varphi$-monochromatic. To see this, fix a factorisation $x=u_1u_2u_3\cdots .$
For each $k\geq 0$ put $y_k=u_{k+1}u_ {k+2}u_{k+3}\cdots \in \A^\nats$ and let $w_k=x \wedge y_k.$ We note that
$y_0=x=w_0.$ For each $k\geq 0$ let 
\[S_k=|u_1| + |u_2|+\cdots + |u_k| + |w_k|\]
so that $S_0=|w_0|=+\infty.$ Clearly $S_k\geq k$ for each $k\geq 0$ and hence $\lim S_k=+\infty.$ 

\begin{lemma}\label{L1} Assume that $\varphi (u_i)=0$ for all $i\geq 1.$ Then $\varphi(u_1u_2\cdots u_k)=0$ for all $k\geq 1.$ 
\end{lemma}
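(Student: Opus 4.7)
The plan is to prove the lemma by induction on $k$, the base case $k=1$ being the hypothesis $\varphi(u_1)=0$. For the inductive step, set $v_k = u_1 u_2 \cdots u_k$ and $n_k=|v_k|$, assume $\varphi(v_k) = 0$, and split according to which clause of the definition of $\varphi(v_k)=0$ holds.

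In the easier case $v_k \prec P_x(n_k)$, let $m \leq n_k$ be the leftmost position at which $v_k$ and $P_x(n_k)$ disagree; by assumption the $m$-th letter of $v_k$ is strictly smaller than $x_m$. Appending $u_{k+1}$ on the right does not alter the first $n_k$ letters, so $v_{k+1}$ still disagrees with $P_x(n_{k+1})$ first at position $m$, and again with a strictly smaller letter. Hence $v_{k+1} \prec P_x(n_{k+1})$, whence $\varphi(v_{k+1})=0$.

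In the harder case $v_k = P_x(n_k)$ with $x \prec y_k$, observe that since $y_k$ is the tail of $x$ after the prefix $v_k$, the word $v_{k+1} = v_k u_{k+1}$ is again a prefix of $x$, so $v_{k+1}=P_x(n_{k+1})$; it thus suffices to show $x \prec y_{k+1}$. I would compare $|u_{k+1}|$ with $|w_k|$. If $|u_{k+1}| > |w_k|$, then $u_{k+1}$, as a prefix of $y_k$, extends past $w_k$, and at position $|w_k|+1$ it carries the letter of $y_k$ at that position, which is strictly greater than $x_{|w_k|+1}$ (by $x \prec y_k$); thus $P_x(|u_{k+1}|) \prec u_{k+1}$, contradicting $\varphi(u_{k+1})=0$. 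So this subcase cannot occur. If instead $|u_{k+1}| \leq |w_k|$, then $u_{k+1}$ is a common prefix of $x$ and $y_k$, so $u_{k+1} = P_x(|u_{k+1}|)$, and the condition $\varphi(u_{k+1})=0$ forces $x \prec x'$, where $x = u_{k+1} x'$. Cancelling the common prefix $u_{k+1}$ in $x \prec y_k = u_{k+1} y_{k+1}$ gives $x' \prec y_{k+1}$, and transitivity yields $x \prec y_{k+1}$, as required.

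The main obstacle is the subtle propagation in the case $v_k = P_x(n_k)$: one must observe both that $v_{k+1}$ is automatically a prefix of $x$, and that $\varphi(u_{k+1}) = 0$ rules out $|u_{k+1}| > |w_k|$ (since otherwise $u_{k+1}$ would inherit the ``large'' letter from $y_k$ at position $|w_k|+1$). Once this is noted, everything collapses to the transitivity chain $x \prec x' \prec y_{k+1}$ of the lexicographic order.
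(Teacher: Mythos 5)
Your proof is correct and takes essentially the same route as the paper's: induction on $k$, ruling out $|u_{k+1}|>|w_k|$ by the contradiction with $\varphi(u_{k+1})=0$, and then using the prefix clause of $\varphi(u_{k+1})=0$ to propagate $x\prec y_{k+1}$ (the paper phrases this last step via the finite comparison $P_x(|va|)\preccurlyeq va\prec vb$ rather than cancellation and transitivity on infinite words, a cosmetic difference). Note that your Case 1 ($v_k\prec P_x(n_k)$) is vacuous, since $v_k=u_1\cdots u_k$ is always a prefix of $x$; including it is harmless but unnecessary.
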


\begin{proof} We proceed by induction on $k.$ The result is clear for $k=1$ since  $\varphi(u_1)=0. $ For the inductive step, fix $k\geq 1$ and suppose $\varphi(u_1\cdots u_k)=0.$   As $u_1\cdots u_k$ is a prefix of $x$ and $\varphi(u_1\cdots u_k)=0,$ we can write $x=w_kay'$ and $x=u_1u_2\cdots u_kw_kby''$ for some $y', y'' \in \A^\nats$ and $a,b\in \A$ with $a\prec b.$
It follows that $|u_{k+1}|\leq |w_k|$ for otherwise $w_kb$ is a prefix of $u_{k+1}$ and we would have $P_x(|u_{k+1}|)\prec u_{k+1}$ whence $\varphi(u_{k+1})=1,$ a contradiction. Thus we can write $w_k=u_{k+1}v$ for some $v\in \A^*$ so that  $x=u_{k+1}vay'$ and $x=u_1u_2\cdots u_ku_{k+1}vby''.$ Since $u_{k+1}$ is a prefix of $x$ and $\varphi(u_{k+1})=0,$ it follows that $P_x(|vb|)=P_x(|va|)\preccurlyeq va \prec vb$ which combined with the fact that $u_1\cdots u_{k+1}$ is a prefix of $x$ implies that $\varphi(u_1\cdots u_{k+1})=0$ as required. \end{proof}

\begin{lemma}\label{L2} Assume that $\varphi (u_i)=0$ for all $i\geq 1.$ Then $S_k\leq S_{k-1}$ for all $k\geq 1.$ 
\end{lemma}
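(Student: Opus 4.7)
The goal $S_k \leq S_{k-1}$ is algebraically equivalent to $|w_k| + |u_k| \leq |w_{k-1}|$, since the first $k-1$ terms of the sums telescope. For $k=1$ there is nothing to prove, since $|w_0|=+\infty$, so I would immediately restrict attention to $k\geq 2$. My plan is to show the stronger statement $|w_k|\leq |w_{k-1}|-|u_k|$ by producing a single contradiction with the hypothesis $\varphi(u_k)=0$ whenever this inequality fails.

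First I would invoke Lemma~\ref{L1} to conclude that $\varphi(u_1\cdots u_{k-1})=0$, and then replay the argument already used in the proof of Lemma~\ref{L1} to obtain $|u_k|\leq |w_{k-1}|$. This lets me write $w_{k-1}=u_kv$ with $v\in \A^*$ of length $|w_{k-1}|-|u_k|$. Because $w_{k-1}$ is a prefix of $x$, so is $u_k$; combined with $\varphi(u_k)=0$ and the definition \eqref{E}, this forces $x\prec \tilde y$, where $\tilde y$ denotes the suffix of $x$ obtained by deleting the initial $u_k$. I would also fix, as in the proof of Lemma~\ref{L1}, the letters $a\prec b$ with $x=w_{k-1}ay'$ and $y_{k-1}=w_{k-1}by''$.

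Now I argue by contradiction, assuming $|w_k|\geq |v|+1$. The key bookkeeping step is to identify $y_k$, $\tilde y$ and the prefixes of $x$ positionally: since $(y_{k-1})_l=x_l$ for $1\leq l\leq |w_{k-1}|$ and $y_k=\sigma_{|u_k|}(y_{k-1})$, one obtains $(y_k)_j=x_{|u_k|+j}=(\tilde y)_j$ for $1\leq j\leq |v|$, while $(y_k)_{|v|+1}=b$ (this is the character of $y_{k-1}$ immediately after $w_{k-1}$) and $(\tilde y)_{|v|+1}=x_{|u_k|+|v|+1}=a$ (this is the character of $x$ immediately after $w_{k-1}=u_kv$). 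The hypothesis $|w_k|\geq |v|+1$ means $x$ and $y_k$ agree on their first $|v|+1$ letters, so $x$ and $\tilde y$ agree on the first $|v|$ letters and differ at position $|v|+1$ with $x_{|v|+1}=b\succ a=\tilde y_{|v|+1}$. Hence $\tilde y\prec x$, contradicting $x\prec \tilde y$. Therefore $|w_k|\leq |v|$, and the desired inequality $S_k\leq S_{k-1}$ follows by substitution.

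The routine part is the telescoping and the invocation of Lemma~\ref{L1}; the delicate part I expect to be the main obstacle is getting the three simultaneous positional identifications right, namely transferring information about the common prefix $w_{k-1}$ of $x$ and $y_{k-1}$ into an identification between the initial segments of $y_k$ and of the shifted word $\tilde y=\sigma_{|u_k|}(x)$, which is what lets the two inequalities $a\prec b$ and $x\prec\tilde y$ come into direct conflict.
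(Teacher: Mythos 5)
Your proof is correct and follows essentially the same route as the paper: invoke Lemma~\ref{L1}, replay its argument to write $w_{k-1}=u_kv$, and then show that $|w_k|>|v|$ would force the suffix of $x$ following the prefix $u_k$ to be lexicographically smaller than $x$, contradicting $\varphi(u_k)=0$. Your explicit positional bookkeeping with $\tilde y$ and $y_k$ is just a more detailed rendering of the paper's step ``$vb$ would be a prefix of $x$, hence $va\prec vb=P_x(|va|)$, so $\varphi(u_k)=1$.''
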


\begin{proof} We first note that $S_1\leq S_0$ since $S_0=+\infty.$ Now fix $k\geq 1;$ we will show that $S_{k+1}\leq S_k.$ By the previous lemma we have that $\varphi(u_1u_2\cdots u_k)=0.$ As in the proof of the previous lemma, we can write $x=u_{k+1}vay'$ and $x=u_1u_2\cdots u_ku_{k+1}vby'' $ for some $y', y'' \in \A^\nats,$  $v\in \A^*$ and $a,b\in \A$ with $a\prec b.$ We claim that $|w_{k+1}|\leq |v|.$  In fact, if  $|w_{k+1}|>|v|,$ then $vb$ would be a prefix of $x,$ and hence $va\prec vb =P_x(|va|)$ which in turn implies
that $\varphi(u_{k+1})=1,$ a contradiction. Thus  $|w_{k+1}|\leq |v|=|w_k|-|u_{k+1}|$  or equivalently $|u_{k+1}| + |w_{k+1}| \leq |w_k|$ from which it follows that $S_{k+1}\leq S_k$ as required.\end{proof}

Returning to the proof of Theorem~\ref{M},  we must show that the factorisation $x=u_1u_2u_3\cdots $ is not $\varphi$-monochromatic. 
Suppose to the contrary that $\varphi(u_i)=\varphi(u_j)$ for all $i,j\geq 0.$ If $\varphi(u_i)=0$ for all $i\geq 1,$ then by the previous lemma we have that $S_k\leq S_{k-1}$ for all $k\geq 1$ contradicting that $\lim S_k=+\infty.$ If on the other hand $\varphi(u_i)=1$ for all $i\geq 1,$ then by replacing the original order on $A$ by the reverse order we would have $\varphi(u_i)=0$ for all $i\geq 1,$ and hence as above the previous lemma yields the desired contradiction. This concludes our proof of Theorem~\ref{M}. \end{proof}

We end  this section with some applications of Theorem~\ref{M} or its proof.  An infinite word $x\in \A^\nats$ is called {\it Lyndon} if there exists an order on $\A$ relative to which $x$ is strictly smaller  than each of its proper suffixes. Our first application is the following  result originally proved in \cite{DPZ}:

\begin{corollary}\label{C1} A Lyndon word $x\in \A^\nats$ does not admit a prefixal factorisation, i.e., a factorisation of the form $x=u_1u_2u_3\cdots$ where each $u_i$ is a prefix of $x.$
 \end{corollary}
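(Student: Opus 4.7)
My plan is to derive Corollary~\ref{C1} directly from Theorem~\ref{M} by reusing the separating $2$-colouring $\varphi$ constructed in its proof, with the order on $\A$ taken to be the Lyndon order witnessing the property for $x$.

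First I would observe that any Lyndon word is non-periodic: if $x = u^\omega$ with $u \in \A^+$, then the suffix $y$ obtained by shifting $x$ by $|u|$ positions equals $x$, contradicting the strict inequality $x \prec y$ required by the Lyndon property. In particular, Theorem~\ref{M} applies and the colouring $\varphi$ defined by (\ref{E}) has no $\varphi$-monochromatic factorisation of $x$.

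Next I would assume, for contradiction, that $x = u_1 u_2 u_3 \cdots$ is a prefixal factorisation, i.e.\ each $u_i$ is a prefix of $x$. Then for every $i \geq 1$ we have $u_i = P_x(|u_i|)$, and we can write $x = u_i y_i$ where $y_i \in \A^\nats$ is a proper suffix of $x$ (proper because $u_i \in \A^+$ is non-empty). Since $x$ is Lyndon with respect to the chosen order, $x \prec y_i$, and the first clause of the definition of $\varphi$ yields $\varphi(u_i) = 0$. Hence the prefixal factorisation is $\varphi$-monochromatic, contradicting Theorem~\ref{M} and completing the proof.

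There is no real obstacle here: the entire argument is a one-line application of the construction from Theorem~\ref{M}, once one notices that the Lyndon hypothesis forces every prefix of $x$ to receive colour $0$. Alternatively, one could bypass Theorem~\ref{M} itself and appeal directly to Lemma~\ref{L2}, deducing that the sequence $S_k = |u_1| + \cdots + |u_k| + |w_k|$ would be non-increasing, in contradiction with $\lim_k S_k = +\infty$.
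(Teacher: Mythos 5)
Your proposal is correct and follows essentially the same route as the paper: invoke the colouring $\varphi$ from (\ref{E}) with the order witnessing the Lyndon property, note that a Lyndon word is non-periodic, and observe that each prefix $u_i$ receives colour $0$ because $x \prec y_i$, contradicting that $\varphi$ is separating. Your explicit verification of non-periodicity and the remark that one could instead appeal directly to Lemma~\ref{L2} are fine but add nothing beyond the paper's argument.
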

 
 \begin{proof} Suppose to the contrary that $x=u_1u_2u_3\cdots$ is a prefixal factorisation of $x.$  Let $\varphi: \A^+ \rightarrow \{0,1\}$ be the separating $2$-colouring for $x$ defined in (\ref{E}). Note that $x$  being Lyndon is not-periodic. Since $x$ is Lyndon we have that $\varphi(u_i)=0$ for all $i\geq 1,$ contradicting that $\varphi$ is a separating $2$-colouring for $x.$   \end{proof}
 
Let $x\in \A^\nats$ and $a\in \A.$  A factor $u$ of $x$ is said to be {\it rich} in $a$ if $|u|_a\geq |v|_a$ for all factors $v$ of $x$ with $|v|=|u|.$ Theorem~6.7 in \cite{DPZ} states that a Sturmian word $x\in \{0,1\}^\nats$ does not admit a  factorisation of the form $x=u_1u_2u_3\cdots$ where each $u_i$ is a prefix of $x$ rich in the same letter $a\in \{0,1\}.$  The following generalises this result to all binary non-periodic words:

 \begin{corollary}\label{C2}Let $x\in \{0,1\}^\nats$ and $a\in \{0,1\}.$ Suppose $x$ admits a prefixal factorisation $x=u_1u_2u_3\cdots$ with each $u_i$ rich in $a.$ Then $x$ is periodic. 
 \end{corollary}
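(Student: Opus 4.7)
My plan is to argue by contradiction: suppose $x$ is non-periodic and yet admits the hypothesised prefixal factorisation with every $u_i$ rich in $a$. The idea is to deploy the separating $2$-colouring furnished by Theorem~\ref{M} and show that the $u_i$'s must all receive the same colour, contradicting separation.

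Write $n_i = |u_i|$. I would fix the order on $\{0,1\}$ in which $a$ is the smaller letter, let $b$ denote the other letter, and take $\varphi$ to be the colouring from~(\ref{E}) relative to this order. For each $i$ set $z_i := x_{n_i+1}x_{n_i+2}\cdots$, so that $x = u_i z_i$ and $u_i = P_x(n_i)$; by the definition of $\varphi$, we have $\varphi(u_i) = 0$ precisely when $x \prec z_i$. The goal therefore reduces to proving $x \prec z_i$ for every $i \geq 1$.

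To this end I would compare $x$ and $z_i$ at their first point of disagreement. Set $q_i := |x \wedge z_i|$; this is finite, since $x = z_i$ would exhibit $x$ as periodic with period $n_i$. The unwanted case is $x_{q_i + 1} = b$ and $x_{n_i + q_i + 1} = a$, and the key observation I plan to use to rule it out is that richness of $u_i$ in $a$ is equivalent to the subadditivity inequality
\[ f(k) + f(n_i) \geq f(k + n_i) \quad \text{for every } k \geq 0, \]
where $f(n) := |P_x(n)|_a$; indeed the $a$-count of the length-$n_i$ factor $x_{k+1}\cdots x_{k+n_i}$ equals $f(k+n_i) - f(k)$ and is at most $|u_i|_a = f(n_i)$. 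Substituting $k = q_i + 1$ and combining with the identity $f(n_i + q_i) = f(n_i) + f(q_i)$, which is immediate from the definition of $q_i$ (the chunks $x_1\cdots x_{q_i}$ and $x_{n_i+1}\cdots x_{n_i+q_i}$ coincide), should collapse the inequality to $0 \geq 1$.

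I expect the main conceptual obstacle to be recognising the subadditivity reformulation of richness; once that is in hand, concluding $\varphi(u_i) = 0$ for every $i$ is a short calculation, and this conclusion directly contradicts the separating property of $\varphi$ established in the proof of Theorem~\ref{M}.
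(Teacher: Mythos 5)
Your proposal is correct and follows essentially the same route as the paper: assume non-periodicity, take the colouring of~(\ref{E}) with $a$ as the least letter, and show $\varphi(u_i)=0$ for all $i$ by ruling out $z_i \prec x$ via richness, which contradicts the separating property. Your subadditivity computation with $f(n)=|P_x(n)|_a$ at $k=q_i+1$ is just an arithmetic rewriting of the paper's observation that the length-$|u_i|$ factor ending at position $n_i+q_i+1$ would contain one more occurrence of $a$ than $u_i$.
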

 
 \begin{proof}Suppose to the contrary that $x$ is not periodic. Let $\varphi:\{0,1\}^+\rightarrow \{0,1\}$ be the separating $2$-colouring for $x$ defined in (\ref{E}) relative to the order on $\{0,1\}$ where $a$ is taken to be the least element. For each $i\geq 1,$ writing
 $x=u_iy_i$ with $y_i\in \{0,1\}^\nats,$ we claim that $x\prec y_i.$ Otherwise if $y_i\prec x,$ then we can write
 $x=zbx'=u_izay'$ for some $z\in \{0,1\}^*,$ $x',y'\in \{0,1\}^\nats$ and where $\{a,b\}=\{0,1\}.$ But then the factor $u_i'$ of length $|u_i|$ immediately preceding the suffix $y'$ of $x$ would contain one more occurrence of the symbol $a$ than $u_i,$ contradicting that
 $u_i$ was rich in $a.$  Having established that $x\prec y_i,$ it follows that $\varphi(u_i)=0$ for all $i \geq 1$ contradicting that $\varphi$ is a separating $2$-colouring for $x.$ \end{proof}

\begin{corollary}\label{C3}Let $\A$ be an arbitrary non-empty set, $x\in \A^\nats$ and $a\in \A.$ Suppose $x$ admits a prefixal factorisation $x=u_1u_2u_3\cdots$ with each $u_i$ rich in $a.$ Then $\{n\in \nats: x_n=a\}$ is a finite union of (infinite) arithmetic progressions. 
 \end{corollary}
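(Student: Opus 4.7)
My plan is to reduce to the binary setting by a letter-to-letter projection and invoke Corollary~\ref{C2}. Define $\pi: \A \to \{0,1\}$ by $\pi(a) = 1$ and $\pi(b) = 0$ for every $b \in \A \setminus \{a\}$, and extend $\pi$ coordinate-wise to a length-preserving morphism on $\A^+$ and to a map $\pi: \A^\nats \to \{0,1\}^\nats$. Set $y = \pi(x)$ and $v_i = \pi(u_i)$ for each $i \geq 1$. Because $\pi$ preserves length and sends prefixes to prefixes, the factorisation $y = v_1 v_2 v_3 \cdots$ is a prefixal factorisation of $y$.

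The key step is to check that the richness hypothesis transfers: each $v_i$ should be rich in $1$ as a factor of $y$. This is immediate from the fact that $\pi$ is letter-to-letter. Any factor $w$ of $y$ of length $|v_i|$ has the form $\pi(u)$ for some factor $u$ of $x$ of length $|u_i|$, and
\[
|w|_1 \;=\; |\pi(u)|_1 \;=\; |u|_a \;\leq\; |u_i|_a \;=\; |v_i|_1,
\]
where the inequality is precisely the assumption that $u_i$ is rich in $a$.

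Corollary~\ref{C2} then forces $y$ to be periodic, say $y = w^\omega$ with $w = w_1 \cdots w_p \in \{0,1\}^+$. Since $\pi^{-1}(1) = \{a\}$, we have $\{n \in \nats : x_n = a\} = \{n \in \nats : y_n = 1\}$, and the latter is the union, over those $k \in \{1,\dots,p\}$ with $w_k = 1$, of the infinite arithmetic progressions $\{k + mp : m \geq 0\}$. (In the degenerate case when $a$ does not occur in $x$, the set is empty, i.e.\ an empty union of progressions.) This exhibits $\{n \in \nats : x_n = a\}$ as a finite union of infinite arithmetic progressions, as required. There is no real obstacle in the argument: the only thing to verify is that the obvious projection preserves the hypotheses of Corollary~\ref{C2}, and the conclusion then follows at once.
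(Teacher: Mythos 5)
Your proof is correct and follows essentially the same route as the paper: project onto $\{0,1\}$ via the letter-to-letter morphism sending $a\mapsto 1$ and all other letters to $0$, observe that prefixality and richness are preserved, and apply Corollary~\ref{C2}. Your verification that richness transfers (every factor of the image word of the right length is the image of a factor of $x$) is a detail the paper leaves implicit, and it is checked correctly.
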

 
 \begin{proof} Consider the morphism $\phi: \A\rightarrow \{0,1\}$ given by $\phi(a)=1$ and $\phi(b)=0$ for all $ b\in \A\setminus \{a\}.$
 Then applying $\phi$ to the prefixal factorisation $x=u_1u_2u_3\cdots$ gives a prefixal factorisation of $\phi(x)$ in which each $\phi(u_i)$ is rich in $1.$ Hence by Corollary~\ref{C2} we have that $\phi(x)$ is periodic and hence the set of occurrences of $1$ in $\phi(x),$ which is equal to the set of occurrences of $a$ in $x,$  is a finite union of arithmetic progressions.  \end{proof}

\begin{corollary}\label{C4} Let $x=x_1x_2x_3\cdots \in \A^\nats$ and $k$ be a positive integer.  Let $B\subseteq \A^+$ with $\card(B)\geq 2k-1.$ Suppose that $x\in A^\nats$ for every $k$-element subset $A$ of $B.$ Then $x$ is periodic. \end{corollary}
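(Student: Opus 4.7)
The plan is to argue by contradiction using Theorem~\ref{M} together with a pigeonhole argument on a $2$-colouring of $\A^+$.

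Suppose for contradiction that $x$ is not periodic. Then by Theorem~\ref{M} there exists a separating $2$-colouring $\varphi: \A^+ \rightarrow \{0,1\}$ such that no factorisation of $x$ is $\varphi$-monochromatic. Now restrict $\varphi$ to $B$. Since $\card(B) \geq 2k-1$, the pigeonhole principle applied to the two colour classes $\varphi^{-1}(0) \cap B$ and $\varphi^{-1}(1) \cap B$ guarantees that at least one of them has cardinality $\geq k$ (otherwise $|B| \leq 2(k-1) = 2k-2$, a contradiction).

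Pick any $k$-element subset $A \subseteq B$ contained entirely in a single colour class of $\varphi$. By the hypothesis on $B$, we have $x \in A^\nats$, so there is a factorisation $x = u_1 u_2 u_3 \cdots$ with each $u_i \in A$. Since all elements of $A$ share the same $\varphi$-colour, this factorisation is $\varphi$-monochromatic, contradicting the choice of $\varphi$. Hence $x$ must be periodic.

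There is no real obstacle here: the entire content is the existence of the separating $2$-colouring supplied by Theorem~\ref{M}, and the bound $2k-1$ is exactly what the pigeonhole principle needs to force $k$ elements of $B$ into a single colour class. I would simply present the argument in this order — invoke Theorem~\ref{M}, extract a monochromatic $k$-subset by pigeonhole, and derive the forbidden monochromatic factorisation.
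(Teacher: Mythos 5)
Your proof is correct and follows the paper's argument exactly: invoke Theorem~\ref{M} to obtain a separating $2$-colouring, use the pigeonhole principle on $\card(B)\geq 2k-1$ to find a $\varphi$-monochromatic $k$-element subset $A\subseteq B$, and derive a contradiction from the hypothesis $x\in A^\nats$. You merely make explicit the pigeonhole count that the paper leaves implicit.
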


\begin{proof} Let us assume to the contrary that $x$ is not periodic. By Theorem~\ref{M} there exists a $2$-colouring $\varphi:\A^+\rightarrow \{0,1\}$ relative to which no factorisation of $x$ is $\varphi$-monochromatic.  Let $\Sigma_k(B)$ denote the set of all $k$-element subsets of $B.$ By assumption $x$ factors over each $A\in \Sigma_k(B).$  On the other hand, since $\card(B)\geq 2k-1,$ it follows that there exists a $\varphi$-monochromatic subset $A\in  \Sigma_k(B).$ This gives rise to a $\varphi$-monochromatic factorisation of $x,$ a contradiction.  
\end{proof}

\begin{corollary}\label{C5} Let $x=x_1x_2x_3\cdots \in \A^\nats$ and $k$ be a positive integer.  Let $u_1,u_2,u_3,\ldots, u_{2k+1}\in A^+$ and  suppose $x\in \{u_1,u_2\}^\nats \cap \{u_2,u_3\}^\nats\cap\cdots \cap \{u_{2k},u_{2k+1}\}^\nats\cap \{u_{2k+1},u_1\}^\nats.$ Then $x$ is periodic. \end{corollary}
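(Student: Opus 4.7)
My plan is to deduce Corollary~\ref{C5} from Theorem~\ref{M} via the elementary observation that an odd cycle is not properly $2$-colourable. I would argue by contradiction: assume $x$ is not periodic, so Theorem~\ref{M} furnishes a separating $2$-colouring $\varphi: \A^+ \to \{0,1\}$ for which no factorisation of $x$ is $\varphi$-monochromatic.

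Next, I would examine the cyclic sequence of colours $\varphi(u_1), \varphi(u_2), \ldots, \varphi(u_{2k+1}) \in \{0,1\}$, thought of as a vertex-colouring of the odd cycle $C_{2k+1}$ whose edges are precisely the pairs $\{u_i, u_{i+1}\}$ for $1 \leq i \leq 2k$ together with the wrap-around edge $\{u_{2k+1}, u_1\}$. Since the number of edges at which the colour changes as one traverses the cycle once must be even, and the cycle has an odd number $2k+1$ of edges, not all of them can be colour-changing. Hence there exists an index $i$ (where we set $u_{2k+2} = u_1$) for which $\varphi(u_i) = \varphi(u_{i+1})$.

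For this index, the hypothesis gives $x \in \{u_i, u_{i+1}\}^\nats$, so there is a factorisation $x = w_1 w_2 w_3 \cdots$ with each $w_j \in \{u_i, u_{i+1}\}$. Since $\varphi(u_i) = \varphi(u_{i+1})$, every factor $w_j$ has the same $\varphi$-colour, producing a $\varphi$-monochromatic factorisation of $x$ and contradicting the defining property of $\varphi$. Therefore $x$ must be periodic.

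I do not anticipate any serious obstacle here: once Theorem~\ref{M} is in hand, the only additional combinatorial content is the standard odd-cycle parity fact, and the role of the oddness of $2k+1$ is precisely to prevent the vertex-colouring from being proper. The same argument would fail if the number of $u_i$'s were even, since the alternating colouring $0,1,0,1,\ldots$ around a cycle of even length is perfectly consistent and would block the construction of a monochromatic edge.
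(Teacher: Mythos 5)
Your proof is correct and follows essentially the same route as the paper: invoke Theorem~\ref{M} to get a separating $2$-colouring and then use the impossibility of properly $2$-colouring an odd cycle (the paper phrases this by noting that $\varphi(u_i)\neq\varphi(u_{i+1})$ is forced for consecutive pairs, so $\varphi(u_1)=\varphi(u_{2i+1})$ for all $i$, contradicting $x\in\{u_{2k+1},u_1\}^\nats$, while you locate a monochromatic edge by a parity count — the same idea organized in the contrapositive direction).
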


\begin{proof} Suppose to the contrary that $x$ is not periodic. Pick any separating $2$-colouring $\varphi: \A^+\rightarrow \{0,1\}$ for $x.$ Then $\varphi(1)=\varphi(2i+1)$ for each $1\leq i\leq k.$  Thus $x\notin \{u_{2k+1},u_1\}^\nats,$  a contradiction. 
\end{proof}

\section{A reformulation of Theorem~\ref{M} in the language of ultrafilters}

In this section give an equivalent reformulation of Theorem~\ref{M} in terms of  ultrafilters and the Stone-\v Cech compactification of the discrete semigroup $\A^+.$ 
We begin by recalling some basic facts. For more information we refer the reader to \cite{HS}.  
Let  $S$ be a non-empty set and let $\mathscr{P}(S)$ denote the set of all subsets of $S.$  A set $p\subseteq \mathscr{P}(S)$ is called a {\it filter} on $S$ if 
\begin{itemize} 
\item $S\in p$ and $\emptyset \notin p$
\item If $A\in p$ and $B\in p$ then $A\cap B\in p$
\item If $A\subseteq B$ and $A\in p$ then $B\in p.$
\end{itemize}
A filter $p$ on $S$ is called an {\it ultrafilter} if for all $A\in \mathscr{P}(S)$ either $A\in p$ or $A^c\in p$ where $A^c$ denotes the complement of $A,$ i.e., $A^c=S\setminus A.$ Equivalently,  a filter $p$ is an ultrafilter if for each $A\in p$ whenever $A=A_1\cup \cdots \cup A_n$ we have that at least one $A_i\in p.$  Each $x\in S$ determines an ultrafilter $e(x)$ on $S$ defined by $e(x)=\{A\subseteq S: x\in A\}.$ An ultrafilter $p$ on $S$ is called {\it principal} if
$p=e(x)$ for some $x\in S.$ Otherwise $p$ is said to be {\it free}.    
Let $\beta S$ denote the collection of all ultrafilters $p$ on $S.$ By identifying each $x\in S$ with the principal ultrafilter $e(x),$ we regard $S\subseteq \beta S.$   
If $S$ is infinite, then a straightforward application of Zorn's lemma guarantees the existence of free ultrafilters on $S.$
   
Given $A\subseteq S$, we put $\overline A=\{p\in\beta S:A\in p\}$. Then  $\{\overline A:A\subseteq S\}$
defines a basis for a topology on $\beta S$ relative to which $\beta S$ is both  compact and Hausdorff  and the mapping
$x\mapsto e(x)$ defines an injection $S\hookrightarrow \beta S$ whose image is dense in $\beta S.$   In fact, if $S$ is given the discrete topology, then $\beta S$ is identified with the Stone-\v Cech compactification of $S:$  Any continuous mapping from $f: S\rightarrow K,$ where $K$ is a compact Hausdorff space, lifts uniquely to a continuous mapping $\beta f: \beta S \rightarrow K.$  Of special interest is the case in which $S$ is a discrete semigroup.   In this case the operation  on $S$ extends uniquely to $\beta S$ making $\beta S$ a right topological semigroup with $S$ contained in its topological center. This means that
$\rho_p:\beta S\rightarrow \beta S,$ defined by $\rho_p(q)=q\cdot p,$ is continuous for each $p\in\beta S$ and $\lambda_s:\beta S\rightarrow \beta S,$ defined by $\lambda_s(q)=s\cdot q,$ is continuous for each $s\in S.$
The operation $\cdot$ on $\beta S$ is defined as follows:  For $p,q\in \beta S$
\[p\cdot q=\{A\subseteq S: \{s\in S:s^{-1}A\in q\}\in p\},\] where $s^{-1}A=\{t\in S:st\in A\}.$
As a consequence of the  Ellis-Numakura lemma, $\beta S$ contains an idempotent element  i.e., an element $p$ verifying $p\cdot p=p$ (see for instance \cite{HS}). Subsets  $A\subseteq S$ belonging to idempotents in $\beta S$ have rich combinatorial structures: Let $\rm{Fin}(\nats)$ denote the set of all finite subsets of $\nats.$ Given an infinite sequence $\langle s_n\rangle_{n\in \nats}$ in $S,$ 
let 
\[FP\left(\langle s_n\rangle_{n\in \nats}\right)=\{\prod_{n\in F} s_n: F\in \rm{Fin}(\nats)\}\] 
where for each $F\in \rm{Fin}(\nats),$ the product $\prod_{n\in F} s_n$ is taken in increasing order of indices.  A subset $A$ of $S$ is called an {\it IP-set} if $A$ contains $FP\left(\langle s_n\rangle_{n\in \nats}\right)$ for some infinite sequence $\langle s_n\rangle_{n\in \nats}$ in $S.$ IP-sets are characterised as belonging to idempotent elements:  $A\subseteq S$ is an IP-set if and only if $A$ belongs to some idempotent element of $\beta S$ (see for instance Theorem~5.12 in \cite{HS}).  

\noindent The following is a reformulation of Theorem~\ref{M} in the language of ultrafilters:

\begin{thm}\label{UF}  Let $x=x_1x_2x_3\cdots \in \A^\nats$ be an infinite word.  Then $x$ is periodic if and only if there exists  $p\in \beta \A^+$ such that for each $A\in p$ there exists a factorisation $x=u_1u_2u_3\cdots $ with each $u_i \in A.$ 
\end{thm}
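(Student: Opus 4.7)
The plan is to deduce Theorem~\ref{UF} directly from Theorem~\ref{M} together with elementary facts about $\beta\A^+$. The equivalence splits cleanly into two short arguments, and the ultrafilter property is precisely what is needed to turn a $2$-colouring into a single colour class that captures a factorisation.

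For the \emph{if} direction, I would argue by contrapositive. Suppose $x$ is not periodic. By Theorem~\ref{M}, there is a $2$-colouring $\varphi \colon \A^+ \to \{0,1\}$ with no $\varphi$-monochromatic factorisation of $x$. Given any $p \in \beta\A^+$, the two colour classes $A_i = \varphi^{-1}(i)$ partition $\A^+$, so the ultrafilter property forces exactly one of them, say $A_i$, to lie in $p$. If $x$ admitted a factorisation $x = u_1u_2u_3\cdots$ with each $u_j \in A_i$, this would be a $\varphi$-monochromatic factorisation, contradicting the choice of $\varphi$. Hence no $p$ can satisfy the stated property, proving the contrapositive.

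For the \emph{only if} direction, suppose $x = w^\omega$ for some $w \in \A^+$, and set $T = \{w^n : n \geq 1\} \subseteq \A^+$, which is a subsemigroup. I would first verify that the closure $\overline{T} = \{q \in \beta\A^+ : T \in q\}$ is a (compact) subsemigroup of $\beta\A^+$: if $T \in p$ and $T \in q$, then for each $s \in T$ we have $sT \subseteq T$ and hence $T \subseteq s^{-1}T$, so $\{s \in \A^+ : s^{-1}T \in q\} \supseteq T \in p$, giving $T \in p\cdot q$. Being a closed subsemigroup of a compact right topological semigroup, $\overline{T}$ is itself a compact right topological semigroup, so by the Ellis-Numakura lemma it contains an idempotent $p = p\cdot p$. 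For any $A \in p$, the filter property gives $T \cap A \in p$, hence $T \cap A \neq \emptyset$, so some $w^n \in A$. Then $x = w^n \cdot w^n \cdot w^n \cdots$ is a factorisation with all factors in $A$. This $p$ is an idempotent, which also establishes the strengthening promised in the abstract.

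There is no real obstacle here beyond two routine checks: that a $2$-colouring is exactly the amount of information an ultrafilter can arbitrate (a single partition into two pieces), and that $\overline{T}$ is a subsemigroup so that Ellis-Numakura applies. The heavy lifting has already been done in Theorem~\ref{M}; the ultrafilter formulation is essentially a repackaging, with the role of the ultrafilter being to uniformly select one colour class for every $2$-colouring at once.
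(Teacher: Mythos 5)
Your proof is correct. The substantive direction (existence of $p$ implies periodicity) is essentially the paper's own argument, modulo phrasing it as a contrapositive: partition $\A^+$ into the two colour classes of the colouring supplied by Theorem~\ref{M}, note that the ultrafilter must contain one class, and observe that factoring $x$ over that class would be a monochromatic factorisation. Where you genuinely diverge is the easy direction: the paper simply takes the principal ultrafilter $e(u)$ for a period $u$ of $x$, which settles Theorem~\ref{UF} in one line, whereas you show that $\overline{T}$ for $T=\{w^n: n\geq 1\}$ is a closed subsemigroup of $\beta\A^+$ (your verification that $sT\subseteq T$ gives $T\in p\cdot q$ is fine, and $\overline{T}\neq\emptyset$ since it contains the principal ultrafilters $e(w^n)$) and extract an idempotent via Ellis--Numakura. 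That is heavier machinery than Theorem~\ref{UF} requires, but it buys the strengthening stated in the abstract, namely that $p$ may be taken idempotent; the paper defers that refinement to Theorem~\ref{UF2}, where it is obtained by showing that the whole set $\U(x)$ of admissible ultrafilters is a closed subsemigroup and applying Ellis--Numakura there, along the way getting further equivalences (e.g.\ that ${\rm Pref}(x)$ is an IP-set). So your route packages Theorem~\ref{UF} and the idempotent claim into a single argument, at the cost of a slightly less economical proof of the stated theorem.
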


\begin{proof} Suppose $x$ is periodic, i.e.,  $x\in \{u\}^\nats$ for some $u\in \A^+. $ Then the principal ultrafilter $e(u)=\{A\subseteq \A^+: u\in A\}$ obviously verifies the required condition of Theorem~\ref{UF}. Conversely, suppose there exists $p\in \beta \A^+$ such that for each $A\in p$ there exists a factorisation $x=u_1u_2u_3\cdots $ with each $u_i \in A.$ Let $\varphi: \A^+\rightarrow \{0,1\}$ be any $2$-colouring of $\A^+.$ We will show that $x$ admits a $\varphi$-monochromatic factorisation. The result then follows from Theorem~\ref{M}. Consider the partition $\A^+=\varphi^{-1}(0)\cup \varphi^{-1}(1).$ Since $\A^+\in p,$ it follows that $\varphi^{-1}(a)\in p$ for some $a\in \{0,1\}.$ Thus there exists a factorisation $x=u_1u_2u_3\cdots $ with each $u_i\in \varphi^{-1}(a).$ In other words, $x$ admits a $\varphi$-monochromatic factorisation.\end{proof}

\begin{remark}{\rm It can be shown that Theorem~\ref{UF} is actually an equivalent reformulation of Theorem~\ref{M}. More precisely, given an infinite word $x=x_1x_2x_3\cdots \in \A^\nats,$ the following statements are equivalent:
\begin{enumerate}
\item[a)] There exists  $p\in \beta \A^+$ such that for each $A\in p$ we can write $x=u_1u_2u_3\cdots $ with each $u_i \in A.$ 
\item[b)] For each finite colouring $\varphi:\A^+\rightarrow C$ there exists a $\varphi$-monochromatic factorisation of $x.$ 
\end{enumerate}
To see that $a)\Longrightarrow b),$ pick $p$ as in a) and let $\varphi:\A^+\rightarrow C$ be any finite colouring of $\A^+.$ Consider the partition $\A^+=\bigcup_{c\in C}\varphi^{-1}(c).$ Then there exists some $c\in C$ such that $\varphi^{-1}(c)\in p.$ This implies that we can write $x=u_1u_2u_3\cdots$ with each $u_i\in  \varphi^{-1}(c).$ In other words, $x$ admits a $\varphi$-monochromatic factorisation. Conversely to see that $b)\Longrightarrow a),$  assume b) and define $\phi: \mathscr{P}(\A^+)\rightarrow \{0,1\}$ by $\phi(A)=1$ if and only if for every finite partition $A=A_1\cup A_2\cup \cdots \cup A_n$ there exists $1\leq i\leq n$ with $A_i\in \F(x).$ Then $\phi$ verifies the following three conditions for all subsets $A,B \in \mathscr{P}(\A^+):$ \item (1) $\phi(\A^+)=1$ (this is a consequence of b). \item (2) $A\subseteq B$ and $\phi(A)=1$ then $\phi(B)=1.$ \item (3) If $\phi (A)=\phi(B)=0$ then $\phi (A\cup B)=0.$

\noindent By a standard application of Zorn's lemma, it is shown that  there exists $p\in \beta \A^+$ such that $\phi(A)=1$ for all $A\in p$ (see Theorem~3.11 in \cite{HS}).
Finally for each $A\in p,$ by considering the trivial partition $A=A,$ we deduce that $A\in \F(x)$ as required.}
\end{remark}

We next show that $p$ in Theorem~\ref{UF} may be taken to be an idempotent element of $\beta \A^+.$  For $x=x_1x_2x_3\cdots \in \A^\nats,$  we set \[\F(x)=\{A\subseteq \A^+: x\in A^\nats\}\] and \[\U(x)=\{p\in \beta \A^+: p\subseteq \F(x)\}.\]
Thus $p\in \U(x)$ if and only if $x$ factors over every $A\subseteq \A^+$ belonging to $p.$ Thus Theorem~\ref{UF} states that $\U(x)\neq \emptyset$ if and only if $x$ is periodic.

\begin{thm}\label{UF2} Let $x=x_0x_1x_2\cdots\in \A^\nats.$ Then the following are equivalent:
\begin{enumerate}
\item[i)] $x$ is periodic. 
\item[ii)] $\U(x)$ is a closed sub-semigroup of $\beta \A^+.$
\item[iii)] $\U(x)$ contains an idempotent element. 
\item[iv)] The set $\rm{Pref}(x)$ consisting of all prefixes of $x$ is an IP-set
\end{enumerate}
\end{thm}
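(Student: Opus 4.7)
The plan is to establish the four-way equivalence via the cycle $(\mathrm{i}) \Rightarrow (\mathrm{ii}) \Rightarrow (\mathrm{iii}) \Rightarrow (\mathrm{iv}) \Rightarrow (\mathrm{i})$, using Theorem~\ref{UF} to close the loop and the Ellis-Numakura lemma for $(\mathrm{ii}) \Rightarrow (\mathrm{iii})$.

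For $(\mathrm{i}) \Rightarrow (\mathrm{ii})$, I would first identify $\U(x)$ explicitly when $x = u^\omega$: every factorisation of $u^\omega$ has the form $u^{k_1} u^{k_2} u^{k_3}\cdots$ with $k_i \geq 1$, so $A \in \F(x)$ if and only if $A$ meets $T := \{u^n : n \geq 1\}$, which in turn is equivalent to $T \in p$ for an ultrafilter $p$. Hence $\U(x) = \overline{T}$, a basic clopen set in $\beta \A^+$. Since $T$ itself is a sub-semigroup of $\A^+$, the elementary check $T \subseteq s^{-1}T$ for every $s \in T$ shows that $\overline{T}$ is closed under the extended product on $\beta \A^+$, and so is a closed sub-semigroup.

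$(\mathrm{ii}) \Rightarrow (\mathrm{iii})$ is the Ellis-Numakura lemma applied to the nonempty compact right topological semigroup $\U(x)$. For $(\mathrm{iii}) \Rightarrow (\mathrm{iv})$, given an idempotent $p$ of $\U(x)$, I would first show $\Pref(x) \in p$: otherwise its complement lies in $p \subseteq \F(x)$, producing a factorisation $x = u_1 u_2 u_3 \cdots$ in which no $u_i$ is a prefix of $x$, contradicting the obvious fact that $u_1$ is a prefix of $x$. Since $\Pref(x)$ belongs to an idempotent, it is an IP-set by the standard characterisation (Theorem~5.12 of \cite{HS}).

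The main content of the theorem lies in $(\mathrm{iv}) \Rightarrow (\mathrm{i})$. Starting from an IP-set $\Pref(x)$, I would choose an idempotent $p$ with $\Pref(x) \in p$ and show that $p \in \U(x)$, after which Theorem~\ref{UF} forces $x$ to be periodic. For arbitrary $A \in p$, we have $A \cap \Pref(x) \in p$, and because $p$ is idempotent one may extract an infinite sequence $\langle u_n \rangle_{n \in \nats}$ in $\A^+$ with $FP\!\left(\langle u_n \rangle_{n \in \nats}\right) \subseteq A \cap \Pref(x)$. Each partial product $u_1 u_2 \cdots u_n$ is then a prefix of $x$, and since their lengths tend to infinity these prefixes converge to $x$, yielding a factorisation $x = u_1 u_2 u_3 \cdots$ entirely inside $A$; hence $A \in \F(x)$ and $p \in \U(x)$. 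The subtle step, and the one I expect to be the main obstacle, is recognising precisely this two-fold role of the IP-structure on $\Pref(x)$: it simultaneously places factors inside an arbitrary member of $p$ while using the linear order of positions to force the partial products to exhaust $x$.
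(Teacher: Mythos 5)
Your cycle (i)$\Rightarrow$(ii)$\Rightarrow$(iii)$\Rightarrow$(iv)$\Rightarrow$(i) is the same skeleton as the paper's, and the steps (ii)$\Rightarrow$(iii), (iii)$\Rightarrow$(iv) coincide with it; but your proof of (i)$\Rightarrow$(ii) contains a genuine error. You assert that every factorisation of $u^\omega$ has the form $u^{k_1}u^{k_2}u^{k_3}\cdots$; this is false, e.g. $(ab)^\omega=a\cdot ba\cdot ba\cdots$. Consequently your biconditional ``$A\in\F(x)$ iff $A$ meets $T$'' holds only in the ``if'' direction, and the identification $\U(x)=\overline{T}$ is unjustified; as you state it (with $u$ an arbitrary period, not the shortest one) it is even false: for $x=a^\omega$ and $u=aa$ the principal ultrafilter $e(a)$ lies in $\U(x)$ while $T=\{a^{2n}:n\geq1\}\notin e(a)$. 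What your argument really gives is $\overline{T}\subseteq\U(x)$, which yields $\U(x)\neq\emptyset$ (and even an idempotent in $\U(x)$), but (ii) asserts that $\U(x)$ \emph{itself} is closed under the product, and that does not follow from knowing that the subset $\overline{T}$ is a closed subsemigroup. The missing inclusion $\U(x)\subseteq\overline{T}$ (for $u$ the shortest period) is exactly the nontrivial point: one must exclude ultrafilters in $\U(x)$ concentrating on prefixes that are not powers of $u$, and such factorisations do exist, e.g. $(aabaa)^\omega=aab\cdot aa\cdot aab\cdot aa\cdots$, so an actual argument is needed (for instance, push $p$ forward along $w\mapsto |w|\bmod |u|$ and use primitivity of $u$ to get a contradiction when the image is a nonzero residue). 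The paper instead proves closedness of $\U(x)$ for arbitrary $x$ directly and verifies $p\cdot q\in\U(x)$ by exhibiting $u^{n+m}\in A$ for each $A\in p\cdot q$, resting on the fact that every member of any $p\in\U(x)$ contains a power of the shortest period $u$ --- precisely the fact your false structural claim was meant to supply; in your write-up this step is a hole.

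The remaining implication (iv)$\Rightarrow$(i) is correct but takes a genuinely different and heavier route than the paper: you choose an idempotent $p$ containing $\Pref(x)$, use the FP-structure of each $A\cap\Pref(x)\in p$ to produce a factorisation of $x$ inside $A$ (the singletons $u_i$ lie in $A$ and the partial products $u_1\cdots u_n$ are prefixes whose lengths tend to infinity), conclude $p\in\U(x)$, and then invoke Theorem~\ref{UF}, hence the full strength of Theorem~\ref{M}. The paper argues elementarily: if $FP\left(\langle s_n\rangle_{n\in\nats}\right)\subseteq\Pref(x)$ then both $s_1s_2\cdots s_n$ and $s_2\cdots s_n$ are prefixes of $x$ for every $n$, so letting $n\to\infty$ gives $x=s_1x$, i.e. $x=s_1^\omega$. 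Your version is valid, but the direct argument is shorter and independent of the main theorem; in any case, the proposal stands or falls on repairing (i)$\Rightarrow$(ii).
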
 

\begin{proof}  We first note that for each infinite word $x,$ we have that $\U(x)$ is a closed subset of $\beta \A^+.$ In fact, suppose $p\in \beta \A^+\setminus \U(x),$ then there exists $A\in p$ with $A\notin \F(x).$ Then $\overline A$ is an open neighbourhood of $p$ and any $q\in \overline A$ contains the set $A$ and hence is not in $\U(x).$ To see that $i)\Longrightarrow ii),$ suppose that  $x$ is periodic. Pick the shortest $u\in \A^+$ such that  $x\in \{u\}^\nats.$  Then the principal ultrafilter $e(u)=\{A\subseteq \A^+: u\in A\}$ clearly belongs to $\U(x).$ Thus $\U(x)\neq \emptyset.$ We note that if $x$ is periodic, then  $\U(x)$ also contains free ultrafilters. In fact, for each $i\geq 1,$ let $A_i=\{u^j: j\geq i\}.$ Then $p\in \U(x)$ if and only if $A_1\in p$ and any $p\in \beta \A^+$ containing $\{A_i: i\geq 1\}$ is a free ultrafilter belonging to $\U(x).$ It remains to show that $p\cdot q\in \U(x)$ whenever $p,q\in \U(x).$ Let $A\in p\cdot q$ with $p,q\in \U(x).$ Then $\{s\in \A^+:s^{-1}A\in q\}\in p$ and since $p\in \U(x)$ it follows that $u^n\in \{s\in \A^+:s^{-1}A\in q\}$  for some $n\in \nats.$ Thus $\{t\in \A^+: u^nt\in A\}\in q$ and since $q\in \U(x)$ it follows that $u^m \in \{t\in \A^+: u^nt\in A\}$ for some $m\in \nats.$ In other words $u^{n+m}\in A$ and hence $x$ factors over $A.$ Thus $p\cdot q\in \U(x).$
The implication $ii)\Longrightarrow iii)$ follows from the Ellis-Numakura lemma. To see $iii)\Longrightarrow iv)$ pick an idempotent element $p\in \U(x).$ We note that $\rm{Pref}(x)$ belongs to every $q\in \U(x).$ In fact, suppose that  $\rm{Pref}(x)\notin q,$ for some $q\in \U(x).$ Then $\A^+\setminus \rm{Pref}(x)\in q$ which implies that $x$ factors over  $\A^+\setminus \rm{Pref}(x).$ But this is a contradiction since in any factorisation of $x,$ the first term occurring in the factorisation belongs to $\rm{Pref}(x).$ Thus in particular $\rm{Pref}(x)\in p.$ Since $p$ is an idempotent, it follows that $\rm{Pref}(x)$ is an IP-set. Finally, to see that $iv)\Longrightarrow i)$ assume that $\rm{Pref}(x)$ is an IP-set. Then $\rm{Pref}(x)$ contains  $FP\left(\langle s_n\rangle_{n\in \nats}\right)$ for some infinite sequence $\langle s_n\rangle_{n\in \nats}$ of prefixes of $x.$ This means that for each $n\geq 2,$ both $s_1s_2\cdots s_n$ and $s_2\cdots s_n$ are prefixes of $x.$ Thus $x=s_1x$ which implies that $x$ is periodic. \end{proof}

\vspace{2 mm}

\small

\end{document}